  \newcommand{\adj}{\dashv}
  \newcommand{\nerve}{\operatorname{N}}
  \newcommand{\gprod}{\otimes}
  \newcommand{\str}{\mathfrak{C}}
  \newcommand{\id}[1][]{\operatorname{id}_{#1}}
  \newcommand{\simp}[1]{\mathord\Delta^{#1}}
  \newcommand{\horn}[2]{\Lambda^{#1}_{#2}}
  \newcommand{\ncat}[1]{\mathsf{#1}}
  \newcommand{\from}{\colon}
  \declaretheorem[style=definition]{definition}
  \declaretheorem[style=definition,numberlike=definition]{remark}
  \declaretheorem[style=plain,numberlike=definition]{corollary}
  \declaretheorem[style=plain,numberlike=definition]{lemma}
  \declaretheorem[style=plain,numberlike=definition]{theorem}
  \declaretheorem[style=plain,numbered=no,name=Theorem]{theorem*}
  \Crefname{corollary}{Corollary}{Corollaries}
  \Crefname{definition}{Definition}{Definitions}
  \Crefname{lemma}{Lemma}{Lemmas}
  \Crefname{proposition}{Proposition}{Propositions}
  \Crefname{remark}{Remark}{Remarks}
  \Crefname{theorem}{Theorem}{Theorems}
  \Crefname{notation}{Notation}{Notations}
  \newlist{axioms}{enumerate}{1}
  \Crefname{axiomsi}{}{}
  \newenvironment{tikzeq*}
  {
    \begingroup
    \begin{equation*}
    \begin{tikzpicture}[baseline=(current bounding box.center)]
  }
  {
    \end{tikzpicture}
    \end{equation*}
    \endgroup
    \ignorespacesafterend
  }
  \tikzset
  {
    diagram/.style=
    {
      matrix of math nodes,
      column sep={4.3em,between origins},
      row sep={4em,between origins},
      text height=1.5ex,
      text depth=.25ex
    },
    % a line crossing over the other one
    over/.style={preaction={draw=white,-,line width=6pt}},
    % font size of arrow labels
    every to/.style={font=\footnotesize},
    %% arrow styles
    % injective map/monomorphism
    inj/.style={right hook->},
    % surjective map/epimorphism
    surj/.style={-{Latex[open]}},
    % cofibration
    cof/.style={>->},
    % fibration
    fib/.style={->>},
  }
  \DeclareFontFamily{U}{mathx}{\hyphenchar\font45}
  \DeclareFontShape{U}{mathx}{m}{n}{
    <5> <6> <7> <8> <9> <10>
    <10.95> <12> <14.4> <17.28> <20.74> <24.88>
    mathx10}{}
  \DeclareSymbolFont{mathx}{U}{mathx}{m}{n}
  \DeclareFontFamily{U}{mathb}{\hyphenchar\font45}
  \DeclareFontShape{U}{mathb}{m}{n}{
    <5> <6> <7> <8> <9> <10>
    <10.95> <12> <14.4> <17.28> <20.74> <24.88>
    mathb10}{}
  \DeclareSymbolFont{mathb}{U}{mathb}{m}{n}
  \DeclareMathAccent{\widebar}{0}{mathx}{"73}
  \DeclareMathSymbol{\Rsh}{\mathrel}{mathb}{"E9}
  \DeclareFontFamily{U}{MnSymbolA}{}
  \DeclareFontShape{U}{MnSymbolA}{m}{n}{
    <-6> MnSymbolA5
    <6-7> MnSymbolA6
    <7-8> MnSymbolA7
    <8-9> MnSymbolA8
    <9-10> MnSymbolA9
    <10-12> MnSymbolA10
    <12-> MnSymbolA12}{}
  \DeclareSymbolFont{MnSyA}{U}{MnSymbolA}{m}{n}
  \DeclareMathSymbol{\twoheaddownarrow}{\mathrel}{MnSyA}{27}
  \newcommand{\MSC}[1]{%
    \let\thempfn\relax
    \footnotetext[0]{2020 Mathematics Subject Classification: #1.}
  }
\tikzstyle{vertex}=[circle, fill, minimum size=4pt, inner sep=0pt]
\newcommand{\cSet}{\ncat{cSet}}
\newcommand{\Graph}{\ncat{Graph}}
\newcommand{\gtimes}{\otimes} % box product of graphs
\newcommand{\gexp}[2]{\ensuremath{#1}^{\gtimes #2}} % Box-power of a graph
\newcommand{\ghom}[2]{\operatorname{hom}^{\gtimes}(#1, #2)} % internal hom in Graph
\newcommand{\face}[2]{\partial^{#1}_{#2}} % face operator
\newcommand{\boxcat}{\mathord{\square}} % box category
\newcommand{\cube}[1]{\mathord{\square^{#1}}} % standard n-cube
\newcommand{\gnerve}[1][]{\mathrm{N}_{#1}} % nerve of a graph - optional parameter for m-nerve
\newcommand{\lhom}{\operatorname{hom}_{L}} % left hom-cSet
\newcommand{\rhom}{\operatorname{hom}_{R}} % left hom-cSet
\newcommand{\join}{\ast} % join of simplicial sets
\newcommand{\cohnerve}{{\nerve}_{\boxcat}} % coherent nerve of a cubical category 
\newcommand{\restr}[2]{{#1}|_{#2}} % restriction of a map
\author{
  Daniel Carranza
  \and Krzysztof Kapulkin 
  \and Jinho Kim
}
\title{Nonexistence of colimits in naive discrete homotopy theory}
\date{\today}
\begin{document}

  \maketitle

\begin{abstract} 
  We show that the quasicategory defined as the localization of the category of (simple) graphs at the class of A-homotopy equivalences does not admit colimits. 
  In particular, we settle in the negative the question of whether the A-homotopy equivalences in the category of graphs are part of a model structure.

    \MSC{05C25 (primary), 55U35, 18N60, 18N50 (secondary)}
\end{abstract}

During a recent workshop ``Discrete and Combinatorial Homotopy Theory'' at the American Institute of Mathematics, it was asked whether A-homotopy equivalences are part of a model structure on the category of simple graphs.
In this short note, we settle this question in the negative by considering the quasicategory associated to the marked category (i.e., a category along with a distinguished wide subcategory) of simple graphs and A-homotopy equivalences does not have pushouts.
The result then follows, since model categories are known to present quasicategories that are both complete and cocomplete.

A-homotopy theory, introduced by Barcelo and collaborators \cites{barcelo-kramer-laubenbacher-weaver,babson-barcelo-longueville-laubenbacher,barcelo-laubenbacher}  based on the earlier work of Atkin \cites{atkin:i,atkin:ii}, uses methods of homotopy theory to study combinatorial properties of graphs.
In studying A-homotopy theory, one considers two classes of maps: A-homotopy equivalences, i.e., maps with inverses up to A-homotopy, and weak A-homotopy equivalences, i.e., maps inducing isomorphisms on all A-homotopy groups
These two choices lead to two distinct homotopy theories, and we refer to the former of those as the \emph{naive} discrete homotopy theory and the latter as the discrete homotopy theory.

A concise, 3-page introduction to A-homotopy theory can be found in \cite[\S12]{carranza-kapulkin-lindsey:calculus-of-fractions} and we will follow the notation established there.
In particular, we write $\Graph$ for the (locally Kan) cubical category of graphs and $\cohnerve \Graph$ for its cubical homotopy coherent nerve.
As observed there, the quasicategory $\cohnerve \Graph$ is the localization of the category of simple graphs at the class of A-homotopy equivalences.
The box product of graphs is denoted $\gtimes$ and its right adjoint by $\ghom{-}{=}$.
The nerve of a graph $G$ is a cubical set $\gnerve G$.
The geometric product of cubical sets is also denoted by $\gprod$.
The right adjoint to the functor $- \gprod X$ (or $X \gprod -$) is denoted by $\lhom(X, -)$ (respectively, $\rhom(X, -)$).

Our main theorem (\cref{cohnerve-graph-no-pushout}) asserts that a certain span does not admit a colimit, revealing an inherent rigidity of A-homotopy equivalences.
From that, we deduce that A-homotopy equivalences are not part of any model or cofibration category structure on the category of simple graphs.
This is of course not an issue with discrete homotopy theory, but instead with the class of A-homotopy equivalences, which for independent reasons is not useful for combinatorial applications either.

Lastly, we mention that Goyal and Santhanam have recently established similar results \cite{goyal-santhanam:i,goyal-santhanam:ii} in the context of Dochtermann's $\times$-homotopy theory \cite{dochtermann:hom-complex-and-homotopy}.
Their techniques are more direct and do not involve higher category theory.
Our proof implies not only that there is no model/cofibration category structure on simple graphs with A-homotopy equivalences, but also that there cannot be such a structure on any marked category DK-equivalent to it.
In other words, we identify a problem with the homotopy theory, not a presentation thereof.

%One can ask about properties of this (naive discrete) homotopy theory, e.g., existence of homotopy colimits therein.
%In the present paper, we settle this question in the negative by showing that the associated quasicategory does not admit colimits.
%It follows that the A-homotopy equivalences in the category of graphs are not part of a model or cofibration category structure.
%Indeed, homotopy theories presented by cofibration categories are known to possess all homotopy colimits, while those presented by model categories are known to possess all homotopy limits and colimits.

%The key tool in our work is an explicit description of the localization of the category of graphs at the class of A-homotopy equivalences which is obtained in \cite[Thm.~5.24]{carranza-kapulkin:cubical-graphs}.

%In \cite{goyal-santhanam:i,goyal-santhanam:ii}, similar results are obtained in the context of $\times$-homotopy theory of Dochtermann \cite{dochtermann:hom-complex-and-homotopy}.
%Namely, the authors show that $\times$-homotopy equivalences are not part of a model/cofibration category structure on the category of graphs with loops and graph homomorphisms.
%Our result shows that there could not exist such a structure on any category resulting in equivalent homotopy theory.

%\subsection{$\cohnerve \Graph$ does not have all pushouts}

Sepcifically, we consider the following diagram
\[ \begin{tikzcd}
    I_0 \sqcup I_0 \ar[r] \ar[d] & I_0 \\
    I_0
\end{tikzcd} \]
which we denote by $D \from \horn{2}{0} \to \cohnerve\Graph$.
We will show that this diagram does not admit a pushout.
Informally, the outline of our argument is as follows.
\begin{enumerate}
    \item We give an explicit description of the objects and morphisms (i.e.~the 1-skeleton) of the quasicategory of cones under $D$.
    A cone under $D$ may be thought of as a graph $G$ and a cycle in $G$.
    A morphism between cones consists of a graph map $f \from G \to H$ and a homotopy between the induced cycles in $H$.
    \item This description then shows there is no initial cone under $D$: given a cycle in a graph $G$ of length $m$, its image under any map $G \to C_{m+1}$ must be null-homotopic (where $C_{m+1}$ denotes the cycle graph with $m+1$ vertices).
    This implies there can be no homotopy from the image of this cycle to the tautological cycle in the graph $C_{m+1}$.
    That is, for every cone $\lambda$ under $D$, there exists a cone $\lambda'$ for which there is no morphism $\lambda \to \lambda'$.
\end{enumerate}
% We caution that this outline is 
The formal statement and proof of (1) may be found in \cref{cone-D-obj,cone-D-mor}.
The precise formulation of (2) is given by \cref{cohnerve-graph-no-pushout}.

We begin with a short calculation.
\begin{lemma} \label{n-cube-hom-coprod-pt}
    For any graph $G$ and set $S$, we have an isomorphism
    \[ \cSet(\cube{n}, \gnerve \ghom{S \times I_0}{G}) \cong \cSet(S \times \cube{n}, \gnerve G) \]
    natural in $G$.
\end{lemma}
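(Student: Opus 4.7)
The plan is to reduce both sides to a hom-set in $\Graph$ by formal adjunction manipulations, and then identify the two resulting source graphs using distributivity of $\otimes$ and the unit property of $I_0$. Let $J_n$ be the graph representing the $n$-cubes of $\gnerve$, so that $(\gnerve H)_n \iso \Graph(J_n, H)$ naturally in $H \in \Graph$; concretely, $J_n$ is the value at $\cube{n}$ of the left adjoint to $\gnerve$.

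Applying this representation on the left, together with the adjunction $\uvar \otimes (S \times I_0) \dashv \ghom{S \times I_0}{\uvar}$, yields
\[
\cSet(\cube{n}, \gnerve \ghom{S \times I_0}{G}) \iso \Graph(J_n, \ghom{S \times I_0}{G}) \iso \Graph(J_n \otimes (S \times I_0), G).
\]
On the right, using $S \times \cube{n} \iso \coprod_S \cube{n}$ and the fact that $\cSet(\uvar, \gnerve G)$ sends coproducts to products,
\[
\cSet(S \times \cube{n}, \gnerve G) \iso \prod_{s \in S} \Graph(J_n, G) \iso \Graph(S \times J_n, G).
\]
It therefore suffices to produce a natural graph isomorphism $J_n \otimes (S \times I_0) \iso S \times J_n$. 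Since $\otimes$ is cocontinuous in each variable (each slot admits the right adjoint $\ghom{\uvar}{=}$), it distributes over the coproduct $S \times I_0 \iso \coprod_S I_0$, reducing the task to the identification $J_n \otimes I_0 \iso J_n$---that is, to the statement that $I_0$ is the monoidal unit of the box product of graphs.

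The only nontrivial conceptual input is this unit identification, which is a standard property of the box product as recorded in the references on A-homotopy theory cited in the introduction. All other bijections in the chain are formal and visibly natural in $G$, so the final isomorphism inherits naturality in $G$ automatically; no step uses anything specific to $n$ or to combinatorial features of $J_n$ beyond the adjunctions already in play.
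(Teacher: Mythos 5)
There is a genuine gap at the very first step: the functor $H \mapsto (\gnerve H)_n$ is not representable, so the graph $J_n$ you posit does not exist and $\gnerve$ has no left adjoint. In the setup the paper follows, an $n$-cube of $\gnerve H$ is a map $\gexp{I_\infty}{n} \to H$ that \emph{stabilizes}, so $(\gnerve H)_n \iso \colim_m \Graph(\gexp{I_m}{n}, H)$ is a filtered colimit of representables, not a representable. Concretely, if some graph $J_1$ satisfied $\Graph(J_1, H) \iso (\gnerve H)_1$ naturally in $H$, then the $1$-cube corresponding to $\id[J_1]$ would be a stabilized map $I_\infty \to J_1$, hence would factor through some finite path $I_m$; naturality would then force every $1$-cube of every $\gnerve H$ to factor through $I_m$, i.e., every walk in every graph to have length at most $2m$, which is false. (Relatedly, $\gnerve$ fails to preserve infinite products, so it cannot be a right adjoint.) This representability is not formal bookkeeping; it is exactly where the lemma has content. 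The paper avoids the issue by staying on the cubical-set side: it identifies $\ghom{S \times I_0}{G}$ with the cotensor $P_{S \times \cube{0}}G$, uses that $\gnerve$ carries this cotensor to $\lhom(S \times \cube{0}, \gnerve G)$, and then applies the $\gprod \dashv \lhom$ adjunction in $\cSet$, together with the same distributivity and unit facts you invoke.

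Your argument can be repaired by replacing $J_n$ with the filtered system $(\gexp{I_m}{n})_m$ and running each step levelwise; this yields
\[ \cSet(\cube{n}, \gnerve \ghom{S \times I_0}{G}) \iso \colim_m \Graph\bigl(\gexp{I_m}{n} \gprod (S \times I_0), G\bigr) \iso \colim_m \prod_{s \in S} \Graph(\gexp{I_m}{n}, G), \]
whereas the right-hand side of the lemma is $\prod_{s \in S} \colim_m \Graph(\gexp{I_m}{n}, G)$. Matching the two requires the filtered colimit to commute with the product over $S$ --- true when $S$ is finite (the only case the paper uses), but an additional point your write-up does not address, and for infinite $S$ it amounts to asking whether stabilization can be chosen uniformly across components. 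So your distributivity and unit steps are fine, but the proof as written rests on a false premise and needs this colimit-of-representables workaround to go through.
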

\begin{proof}
    We calculate
    \begin{align*}
        \cSet(\cube{n}, \gnerve \ghom{S \times I_0}{G}) % &\cong 
        % \cSet(\cube{n}, \gnerve (P_{S \times \cube{0}} {G})) & \\
        &\cong \cSet(\cube{n}, \lhom(S \times \cube{0}, \gnerve G)) \text{ by \cite[Prop.~3.10]{carranza-kapulkin:cubical-graphs}} \\
        &\cong \cSet(\cube{n} \gprod (S \times \cube{0}), \gnerve G) \\
        &\cong \cSet(S \times (\cube{n} \gprod \cube{0}), \gnerve G) \\
        &\cong \cSet(S \times \cube{n}, \gnerve G). \qedhere
    \end{align*}
\end{proof}

For a graph map $f \from I_\infty \to G$ that is stable in the positive (or negative) direction, we write $f(\infty)$ (or $f(-\infty)$, respectively) for the value of $f$ in the stable range.
\begin{theorem} \label{cone-D-obj}
    % \begin{enumerate}
    There is a bijection between the set of objects of the quasicategory of cones under $D$ and the set of tuples $(G, p_1, p_2, p_3, p_4)$, where
    \begin{itemize}
        \item $G$ is a graph; and
        \item $p_1, p_2, p_3$ and $p_4$ are maps $I_\infty \to G$ which are stable in both directions such that
        \[ \begin{array}{l l l l}
            p_1(\infty) = p_2(\infty) & p_1(-\infty) = p_3(-\infty) &
            p_2(-\infty) = p_4(-\infty) & p_3(\infty) = p_4(\infty).
        \end{array} \]
    \end{itemize}
    % \end{enumerate}
\end{theorem}
\begin{proof}
    As $\horn{2}{0} \join \simp{0} \cong \simp{1} \times \simp{1}$, a cone under $D$ is a square $\lambda \from \simp{1} \times \simp{1} \to \cohnerve\Graph$ such that the maps $\restr{\lambda}{\id \times 0}, \restr{\lambda}{0 \times \id} \from \simp{1} \to \cohnerve\Graph$ are both $I_0 \sqcup I_0 \to I_0$.
    Identifying $\simp{1} \times \simp{1}$ as the pushout
    \[ \begin{tikzcd}
        \simp{1} \ar[r, "{\face{}{1}}"] \ar[d, "{\face{}{1}}"'] \ar[rd, phantom, "\ulcorner" very near end] & \simp{2} \ar[d] \\
        \simp{2} \ar[r] & \simp{1} \times \simp{1}
    \end{tikzcd} \]
    by the $\str \adj \cohnerve$-adjunction, this corresponds to a commutative square
    \[ \begin{tikzcd}
        \str[1] \ar[r, "\face{}{1}"] \ar[d, "\face{}{1}"'] & \str[2] \ar[d] \\
        \str[2] \ar[r] & \Graph
    \end{tikzcd} \]
    i.e.~two cubical functors $T, U \from \str[2] \to \Graph$ such that $T\face{}{2}, U\face{}{2} \from \str[1] \to \Graph$ are both the map $I_0 \sqcup I_0 \to I_0$.
    We depict $T$ and $U$ as two homotopy-commutative triangles in the diagram,
    \[ \begin{tikzcd}[sep = large]
        I_0 \sqcup I_0 \ar[r] \ar[d] \ar[rd, "{[v_1,v_3]}"{name=M, description}] & I_0 \ar[d, "v_2"] \ar[from=M, Rightarrow, "p_{12}"] \\
        I_0 \ar[r, "v_4"'] \ar[from=M, Rightarrow, "p_{34}"' very near start] & G
    \end{tikzcd} \]
    where $p_{12} \from \cube{1} \to \gnerve \ghom{I_0 \sqcup I_0}{G}$ denotes the image of the non-degenerate 1-cube in $\str[2](0, 2)$ under $T$, and $p_{34} \from \cube{1} \to \gnerve \ghom{I_0 \sqcup I_0}{G}$ denotes its image under $U$.
    By \cref{n-cube-hom-coprod-pt}, we may identify $p_{12}$ and $p_{34}$ with maps $[p_1, p_2], [p_3, p_4] \from \cube{1} \sqcup \cube{1} \to \gnerve G$ respectively, and naturality gives that:
    \[ \begin{array}{c c c c}
        p_1\face{}{1,0} = v_1 & p_1\face{}{1,1} = v_2 & p_2\face{}{1,0} = v_3 & p_2\face{}{1,1} = v_2 \\
        p_3\face{}{1,0} = v_1 & p_3\face{}{1,1} = v_4 & p_4\face{}{1,0} = v_3 & p_4\face{}{1,1} = v_4.
    \end{array} \]
    By definition, the 1-cubes $p_1, p_2, p_3, p_4 \from \cube{1} \to \gnerve G$ are stable maps $I_\infty \to G$ that satisfy the desired endpoint equalities.
\end{proof}
\begin{remark} \label{rmk:cone-D-is-cycle}
    Let $(G, p_1, p_2, p_3, p_4)$ be a cone under $D$ (under the bijection in \cref{cone-D-obj}).
    One may identify each $p_i \from I_\infty \to G$ with a map $I_m \to G$ where $m$ is the smallest positive integer which makes $p_i$ stable.
    With this, the concatenation $p_1 \cdot p_2^{-1} \cdot p_4 \cdot p_3^{-1}$ gives a cycle in $G$,
    \begin{center}
        \begin{tikzpicture}[scale=0.5]
            % Top path p_1
            \node[vertex, label={above left:$v_1$}] (TL) {}; 
            \path[draw] (TL) -- +(1,0) node[vertex] (T1) {};
            \path (T1) -- +(1.5,0) node[minimum width=0.7cm,label={[label distance=0.4em]:$p_1$}] (Tdots) {$\dots$};
            \draw (T1) -- (Tdots);
            \path[draw] (Tdots) -- +(1.5,0) node[vertex] (T2) {};
            \path[draw] (T2) -- +(1,0) node[vertex, label={above right:$v_2$}] (TR) {};
            % Helper arrow
            \path (Tdots.west) -- +(0.2, 0.5) coordinate (TarrowL);
            \path (Tdots.east) -- +(-0.2, 0.5) coordinate (TarrowR);
            \path[->, draw=black!60] (TarrowL) -- (TarrowR);

            % Right path p_2
            \path[draw] (TR) -- +(0, -1) node[vertex] (R1) {};
            \path (R1) -- +(0, -1.5) node[minimum height=0.7cm, label={[label distance=0.4em]right:$p_2$}] (Rdots) {$\vdots$};
            \draw (R1) -- (Rdots);
            \path[draw] (Rdots) -- +(0, -1.5) node[vertex] (R2) {};
            \path[draw] (R2) -- +(0, -1) node[vertex, label={below right:$v_3$}] (BR) {};
            % Helper arrow
            \path (Rdots.north) -- +(0.5, -0.2) coordinate (RarrowT);
            \path (Rdots.south) -- +(0.5, 0.2) coordinate (RarrowB);
            \path[->, draw=black!60] (RarrowB) -- (RarrowT);

            % Left path p_3
            \path[draw] (TL) -- +(0, -1) node[vertex] (L1) {};
            \path (L1) -- +(0, -1.5) node[minimum height=0.7cm, label={[label distance=0.4em]left:$p_3$}] (Ldots) {$\vdots$};
            \draw (L1) -- (Ldots);
            \path[draw] (Ldots) -- +(0, -1.5) node[vertex] (L2) {};
            \path[draw] (L2) -- +(0, -1) node[vertex, label={below left:$v_4$}] (BL) {};
            % Helper arrow
            \path (Ldots.north) -- +(-0.5, -0.2) coordinate (LarrowT);
            \path (Ldots.south) -- +(-0.5, 0.2) coordinate (LarrowB);
            \path[->, draw=black!60] (LarrowT) -- (LarrowB);

            % Bottom path p_4
            \path[draw] (BL) -- +(1,0) node[vertex] (B1) {};
            \path (B1) -- +(1.5,0) node[minimum width=0.7cm, label={[label distance=0.4em]:$p_4$}] (Bdots) {$\dots$};
            \draw (B1) -- (Bdots);
            \path[draw] (Bdots) -- +(1.5,0) node[vertex] (B2) {};
            \path[draw] (B2) -- (BR);
            % Helper arrow
            \path (Bdots.west) -- +(0.2, 0.5) coordinate (BarrowL);
            \path (Bdots.east) -- +(-0.2, 0.5) coordinate (BarrowR);
            \path[->, draw=black!60] (BarrowR) -- (BarrowL);
        \end{tikzpicture}
    \end{center}
    thus justifying the intuition that a cone under $D$ corresponds to a cycle in a graph $G$.
    However, this process does not give a well-defined function from cones under $D$ to tuples $(G \in \Graph, f \from C_m \to G)$.
    For instance, if one modifies the cycle $p_1 \cdot p_2^{-1} \cdot p_4 \cdot p_3^{-1}$ by repeating any endpoint of some $p_i$ then one obtains a different cycle $C_n \to G$ which corresponds to the same cone under $D$.
    However, each $p_i$ induces a well-defined path up to homotopy, thus the cycle $p_1 \cdot p_2^{-1} \cdot p_4 \cdot p_3^{-1}$ represents a well-defined element of the discrete fundamental group of $G$.
\end{remark}
\begin{theorem} \label{cone-D-mor}
    Under the bijection of \cref{cone-D-obj}, let $(G, p_1, p_2, p_3, p_4)$ and $(H, q_1, q_2, q_3, q_4)$ be two cones under $D$.
    There is a bijection between the set of 1-simplices in in the quasicategory of cones under $D$ from $(G, p_1, p_2, p_3, p_4)$ to $(H, q_1, q_2, q_3, q_4)$ and the set of tuples $(f, \alpha_1, \alpha_2, \alpha_3, \alpha_4)$ where
    \begin{itemize}
        \item $f$ is a graph map $G \to H$; and
        \item $\alpha_i$ is a homotopy $\gexp{I_\infty}{2} \to H$ from $f \circ p_i$ to $q_i$ for $i = 1, 2, 3, 4$.
    \end{itemize}
\end{theorem}
\begin{proof}
    A map $\lambda \to \lambda'$ between cones under $D$ is a map $\alpha \from \horn{2}{0} \join \simp{1} \to \cohnerve\Graph$ such that $\restr{\alpha}{\horn{2}{0} \join \{ 0 \}} = \lambda$ and $\restr{\alpha}{\horn{2}{0} \join \{ 1 \}} = \lambda'$.
    Writing $\horn{2}{0} \join \simp{1}$ as a pushout:
    \[ \begin{tikzcd}
        \simp{2} \ar[r, "\face{}{1}"] \ar[d, "\face{}{1}"'] \ar[rd, phantom, "\ulcorner" very near end] & \simp{3} \ar[d] \\
        \simp{3} \ar[r] & \horn{2}{0} \join \simp{1}
    \end{tikzcd} \]
    as before, this corresponds to the data of two cubical diagrams $T, U \from \str[3] \to \Graph$, which we depict below
    \[ \begin{tikzcd}
        {} & {} & G \ar[dd, "f"] \\
        I_0 \sqcup I_0 \ar[r] \ar[urr, bend left, "{[v_1, v_3]}"] \ar[drr, bend right, "{[w_1, w_3]}"'] & I_0 \ar[ur, "v_2"] \ar[dr, "w_2"] & {} \\
        {} & {} & H
    \end{tikzcd} \qquad \begin{tikzcd}
        {} & {} & G \ar[dd, "f"] \\
        I_0 \sqcup I_0 \ar[r] \ar[urr, bend left, "{[v_1, v_3]}"] \ar[drr, bend right, "{[w_1, w_3]}"'] & I_0 \ar[ur, "v_4"] \ar[dr, "w_4"] & {} \\
        {} & {} & H
    \end{tikzcd} \]
    (note our depiction omits the data of the 1-cubes and 2-cube in the mapping spaces of $\str[3]$).
    Let $\alpha_{12}, \alpha_{34} \from \cube{2} \to \gnerve \ghom{I_0 \sqcup I_0}{H}$ denote the images of the non-degnerate 2-cube in $\str[3](0, 3)$ under $T$ and $U$, respectively.
    By \cref{n-cube-hom-coprod-pt}, we identify $\alpha_{12}$ and $\alpha_{34}$ with maps $[\alpha_1, \alpha_2], [\alpha_3, \alpha_4] \from \cube{2} \sqcup \cube{2} \to \gnerve H$ and apply naturality to identify the faces of each square.
    We depict these 2-cubes below:
    \[ \begin{tikzcd}
        w_1 \ar[r, "q_1"] \ar[d] \ar[rd, phantom, "\alpha_1" description] & w_2 \ar[d] \\
        fv_1 \ar[r, "f \circ p_1"'] & fv_2
    \end{tikzcd} \quad \begin{tikzcd}
        w_3 \ar[r, "q_2"] \ar[d] \ar[rd, phantom, "\alpha_2" description] & w_2 \ar[d] \\
        fv_3 \ar[r, "f \circ p_2"'] & fv_2
    \end{tikzcd} \quad \begin{tikzcd}
        w_1 \ar[r, "q_3"] \ar[d] \ar[rd, phantom, "\alpha_3" description] & w_4 \ar[d] \\
        fv_1 \ar[r, "f \circ p_3"'] & fv_4
    \end{tikzcd} \quad \begin{tikzcd}
        w_3 \ar[r, "q_4"] \ar[d] \ar[rd, phantom, "\alpha_4" description] & w_4 \ar[d] \\
        fv_3 \ar[r, "f \circ p_4"'] & fv_4
    \end{tikzcd} \]
\end{proof}

\begin{theorem} \label{cohnerve-graph-no-pushout}
    For any cone $\lambda$ under $D$, there exists a cone $\lambda'$ under $D$ such that there is no cone map $\lambda \to \lambda'$.
    In particular, the diagram $D$ does not have a pushout in $\cohnerve\Graph$.
\end{theorem}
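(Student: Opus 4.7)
The plan is to exhibit an explicit obstructing cone $\lambda'$ whose cone point is a long cycle graph $C_n$, detected by the discrete fundamental group $\pi_1^A(C_n) \cong \mathbb{Z}$ (valid for $n \geq 5$ by a classical computation in A-homotopy theory).

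Given a cone $\lambda$ with cone point $G$, apply the natural bijection preceding the theorem to identify $\lambda$ with a cycle $c_1 \from I_\infty \to G$ of some finite length $L$ (after which $c_1$ is constant) together with four distinguished vertices. Fix any $n \geq 5$ and any integer $N > L/n$; set $H = C_n$ and let $c_2 \from I_\infty \to C_n$ be a cycle winding exactly $N$ times around $C_n$, with any four distinguished vertices $w_1, w_2, w_3, w_4$. By the same bijection this data assembles into a cone $\lambda'$ with cone point $C_n$. Suppose for contradiction there is a cone map $\lambda \to \lambda'$; by the second natural bijection preceding the theorem it supplies a graph map $f \from G \to C_n$ together with an A-homotopy $\alpha \from \gexp{I_\infty}{2} \to C_n$ from $fc_1$ to $c_2$. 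Such an $\alpha$ is in particular a free A-homotopy, so $fc_1$ and $c_2$ represent the same free A-homotopy class of loops in $C_n$; since $\pi_1^A(C_n) \cong \mathbb{Z}$ is abelian, free A-homotopy classes correspond bijectively to winding numbers. However $fc_1$ is a closed walk of length at most $L$ in $C_n$, so lifting to the universal cover $\mathbb{Z} \to C_n$ and noting that each step displaces by at most one in $\mathbb{Z}$, its winding number is bounded above by $L/n < N$, while $c_2$ winds exactly $N$ times, a contradiction.

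The main obstacle is the winding-number estimate itself, a standard lifting argument to the universal cover which requires some care with the convention for graph homomorphisms (in particular whether edges may collapse to vertices in $C_n$), together with invoking $\pi_1^A(C_n) \cong \mathbb{Z}$ from the A-homotopy theory literature. Once these two ingredients are in place, the proof reduces to a routine translation through the natural bijections already established in the excerpt, and the endpoint conditions on $\alpha$ involving the $v_i$ and $w_i$ are automatically irrelevant because the obstruction is already detected at the level of free A-homotopy classes.
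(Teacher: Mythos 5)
Your proposal is correct and follows essentially the same route as the paper: the obstructing cone point is a cycle graph, and the contradiction comes from a length bound on $fc_1$ versus the nontriviality of $c_2$ detected by $\pi_1^A$ of the cycle, after reducing a cone map to a free A-homotopy via the two natural bijections. The paper's version takes $C_{m+1}$ with the identity cycle (so $fc_1$ is outright nullhomotopic and the stable square conjugates $[c_2]$ to the trivial class), whereas you fix $n \ge 5$ and wind $N > L/n$ times; this is only a cosmetic difference, though your choice has the minor advantage of not implicitly requiring $m+1 \ge 5$.
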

\begin{proof}
    Fix a cone $\lambda$ under $D$.
    By \cref{cone-D-obj}, we identify $\lambda$ as a tuple $(G, p_1, p_2, p_3, p_4)$.
    As the concatenation $p_1 \cdot p_2^{-1} \cdot p_4 \cdot p_3^{-1}$ represents a well-defined element of $A_1(G, v_1)$ (see \cref{rmk:cone-D-is-cycle}), let $\varphi \from C_m \to G$ be a cycle of minimal length which represents this concatenation.

    Let $\psi \from I_\infty \to C_{m+5}$ denote the identity cycle on $C_{m+5}$ based at 0 and $\mathsf{const}_0 \from I_\infty \to C_{m+5}$ denote the constant map at 0.
    By \cref{cone-D-obj}, the tuple $(C_{m+5}, \psi, \mathsf{const}_0, \mathsf{const}_0, \mathsf{const}_0)$ is a cone over $D$.
    We assume there exists a morphism of cones from $(G, p_1, p_2, p_3, p_4)$ to $(C_{m+5}, \psi, \mathsf{const}_0, \mathsf{const}_0, \mathsf{const}_0)$ and derive a contradiction.

    By \cref{cone-D-mor}, this morphism corresponds to a tuple $(f, \alpha_1, \alpha_2, \alpha_3, \alpha_4)$.
    The horizontal concatenation $\alpha_1 \cdot \alpha_2^{-1} \cdot \alpha_4 \cdot \alpha_3^{-1}$ induces a square
    \[ \begin{tikzcd}
        0 \ar[r, "\psi"] \ar[d, "\gamma"'] & 0 \ar[d, "\gamma"] \\
        fv_1 \ar[r, "f \circ \varphi"] & fv_1
    \end{tikzcd} \]
    where $\gamma$ denotes the restriction $\restr{\alpha_1}{(-\infty, -)} \from I_\infty \to C_{m+5}$ of the square $\alpha_1$ to its left leg.
    The cycle $f \circ \varphi$ is (based) homotopic to the constant path as it has length at most $m$.
    Thus, this square yields a chain of based homotopies
    \[ \gamma \sim \gamma \cdot (f \circ \varphi) \sim \psi \cdot \gamma . \]
    This implies $\psi$ is based homotopic to the constant cycle, which is a contradiction as $C_{m+5}$ is a cycle of length greater than 5.
\end{proof}

\begin{corollary}
  The class of A-homotopy equivalences is not part of a model structure or a cofibration category structure on the category of simple graphs.
\end{corollary}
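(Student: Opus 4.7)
The plan is a short reduction to \cref{cohnerve-graph-no-pushout}, by contradiction. Suppose $\Graph$ admits a model structure (respectively, a cofibration category structure) whose class of weak equivalences is the class of A-homotopy equivalences. The first step is to invoke the standard presentation theorems: the underlying quasicategory of a model category is both complete and cocomplete (due to Lurie, and refined by Mazel-Gee, Hinich, and others), while the underlying quasicategory of a cofibration category admits all finite colimits (Szumiło). In either case, pushouts of spans exist in the underlying quasicategory.

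The second step is to identify this underlying quasicategory with $\cohnerve\Graph$. By construction, the underlying quasicategory of a model or cofibration category is equivalent to the Dwyer--Kan localization of its 1-categorical underlying category at its class of weak equivalences; and, as recalled in the introduction and established in \cite{carranza-kapulkin-lindsey:calculus-of-fractions}, $\cohnerve\Graph$ is precisely the localization of $\Graph$ at the A-homotopy equivalences. These two quasicategories are therefore equivalent.

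Combining the two inputs gives the contradiction: the hypothetical structure would force $\cohnerve\Graph$ to admit all pushouts, whereas \cref{cohnerve-graph-no-pushout} exhibits an explicit span with no pushout. This is essentially the entire argument; there is no real obstacle, only bookkeeping. The one subtlety worth flagging is the cofibration-category case, where one must verify that ``finite colimits'' in the localization do indeed include ordinary pushouts of spans — this is standard but deserves a precise citation. It is also worth noting, as the authors emphasize in the introduction, that the argument actually rules out any marked category DK-equivalent to $(\Graph, W_A)$ carrying such a structure, since the conclusion about the associated quasicategory is invariant under DK-equivalence.
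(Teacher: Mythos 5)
Your argument is correct and is essentially identical to the paper's own proof: both deduce the result from \cref{cohnerve-graph-no-pushout} together with the fact that the localization of a model or cofibration category at its weak equivalences admits (finite) colimits, identifying that localization with $\cohnerve\Graph$. The extra care you take with the cofibration-category case (finite colimits include pushouts) is a reasonable refinement but does not change the route.
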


\begin{proof}
  By the preceding theorem, the localization of simple graphs at the class of A-homotopy equivalences does not admit pushouts.
  Since the localization of a model category or a cofibration category at its weak equivalences must be cocomplete \cite{kapulkin-szumilo:frames,szumilo:frames}, the result follows.
\end{proof}

\textbf{Acknowledgements.} We thank the anonymous referee for insightful comments that helped restructure the paper and make arguments cleaner.

This material is based upon work supported by the National Science Foundation under Grant No.~DMS-1928930 while the first two authors participated in a program hosted by the Mathematical Sciences Research Institute in Berkeley, California, during the 2022--23 academic year.

% \textbf{Conflicts of interest/Competing interests.} None. 

% \textbf{Availability of data and material.} Not applicable.

% \textbf{Authors' contributions.} All authors (Daniel Carranza, Chris Kapulkin, and Jinho Kim) contributed equally to the research and preparation of this work.

% \appendix
% \renewcommand{\thesection}{\Alph{section}}

% \begin{appendices}
%   \input{OLD-A-graphs.tex}

%   \input{OLD-B-csets.tex}
  
%   \input{OLD-C-nerves.tex}
% \end{appendices}

\bibliographystyle{amsalphaurlmod}
\bibliography{discrete-homotopy-theory.bib}

\end{document}